\newtheorem{thm}{Theorem}[section]
\newtheorem{lem}[thm]{Lemma}
\newtheorem{prop}[thm]{Proposition}
\theoremstyle{definition}
\newtheorem{rem}[thm]{Remark}
\numberwithin{equation}{thm}
\newcommand{\C}{\mathbb{C}}
\newcommand{\E}{\mathbb{E}}
\newcommand{\F}{\mathbb{F}}
\newcommand{\N}{\mathbb{N}}
\newcommand{\Z}{\mathbb{Z}}
\newcommand{\sN}{\mathcal{N}}
\newcommand{\sV}{\mathcal{V}}
\newcommand{\fb}{\mathfrak{b}}
\newcommand{\g}{\mathfrak{g}}
\newcommand{\fh}{\mathfrak{h}}
\newcommand{\fp}{\mathfrak{p}}
\newcommand{\fu}{\mathfrak{u}}
\newcommand{\ch}{\operatorname{ch}}
\newcommand{\End}{\operatorname{End}}
\newcommand{\Ext}{\operatorname{Ext}}
\newcommand{\opH}{\operatorname{H}}
\newcommand{\heig}{\operatorname{ht}}
\newcommand{\id}{\operatorname{Id}}
\newcommand{\res}{\operatorname{res}}
\newcommand{\wheig}{\operatorname{wht}}
\newcommand{\Uz}{U_\zeta}
\newcommand{\Uzg}{\Uz(\g)}
\newcommand{\uz}{u_\zeta}
\newcommand{\uzg}{\uz(\g)}
\newcommand{\uzb}{\uz(\fb)}
\newcommand{\ug}{u(\g)}
\newcommand{\CZ}{\overline{C}^-_{\Z}}
\newcommand{\subgrp}[1]{\langle #1 \rangle}
\newcommand{\set}[1]{\left\{ #1 \right\}}
\newcommand{\abs}[1]{\left| #1 \right|}
\begin{document}
\title[Differentiating the Weyl generic dimension formula and support varieties]{\large{\bf Differentiating the Weyl generic dimension formula and support varieties for quantum groups}}

\author{Christopher M.\ Drupieski}
\address{
Department of Mathematics \\
University of Virginia \\
Charlottesville, VA~22903}
\email{cmd6a@virginia.edu}
\thanks{The first author was partially supported as a graduate student by NSF grant DMS-0701116}

\author{\ Daniel K.\ Nakano}
\address{
Department of Mathematics \\
University of Georgia \\
Athens, GA~30602}
\email{nakano@math.uga.edu}
\thanks{Research of the second author was partially supported by NSF grant DMS-0654169}

\author{\ Brian J.\ Parshall}
\address{
Department of Mathematics \\
University of Virginia \\
Charlottesville, VA~22903}
\email{bjp8w@virginia.edu}
\thanks{Research of the third author was partially supported by NSF grant DMS-0701116}

\subjclass[2000]{Primary 17B55, 20G; Secondary 17B50}

\begin{abstract} The authors compute the support varieties of all irreducible modules for the small quantum group $\uzg$, where $\g$ is a simple complex Lie algebra, and $\zeta$ is a primitive $\ell$-th root of unity with $\ell$ larger than the Coxeter number of $\g$. The calculation employs the prior calculations and techniques of Ostrik and of Nakano--Parshall--Vella, as well as deep results involving the validity of the Lusztig character formula for quantum groups and the positivity of parabolic Kazhdan-Lusztig polynomials for the affine Weyl group. Analogous support variety calculations are provided for the first Frobenius kernel $G_{1}$ of a reductive algebraic group scheme $G$ defined over the prime field $\F_p$.
\end{abstract}

\maketitle

\section{Introduction} \label{section:introduction}

Let $\uzg$ be the small quantum group associated to the simple finite-dimensional complex Lie algebra $\g$ with parameter $\zeta \in \C$ a primitive $\ell$-th root of unity. Assume that $\ell > h$, the Coxeter number of $\g$. Ginzburg and Kumar \cite{GK} proved that the cohomology algebra $R:=\opH^{2\bullet}(\uzg,\C)$ is isomorphic to the coordinate algebra $\C[\sN]$ of the closed, affine subvariety $\sN=\sN(\g)$ of $\g$ consisting of nilpotent elements. Arkhipov, Bezurkavnikov and Ginzburg \cite{ABG} used this calculation as a starting point to present a new proof of Lusztig's character formula for quantum groups when $\ell > h$. The \cite{ABG} approach demonstrated that understanding character theory for irreducible modules at the representation theoretic level is closely related to the cohomology at the derived level, where the geometry becomes more transparent.

Given a finite-dimensional $\uzg$-module $M$, the annihilator in $R$ of the $\Ext$-group $\Ext_{\uzg}^\bullet(M,M)$ defines a closed subvariety $\sV_{\uzg}(M)$ of $\sN$, called the support variety of $M$. Support varieties provide a method for understanding the interplay between the underlying geometry and the overall representation theory. For the small quantum group, the support varieties for the restriction to $\uzg$ of Weyl modules for the Lusztig quantum group $U_\zeta(\g)$ have been calculated by Ostrik \cite{Ost}, and in subsequent work by Bendel, Nakano, Parshall and Pillen \cite{BNPP}. Bezrukavnikov \cite{Be} recently calculated certain support spaces related to the support varieties of the quantized tilting modules for $\Uzg$, obtaining results similar to those conjectured by Humphreys \cite{Hum2} for the support varieties of tilting modules for algebraic groups.

In this paper we provide an explicit determination of the support varieties $\sV_{\uzg}(L)$ when $L$ is an arbitrary irreducible $\uzg$-module. In
addition to using deep theoretic results on support varieties established in \cite{Ost,NPV}, our calculation makes essential use of the Lusztig character formula for irreducible (type 1, integrable) $U_\zeta(\g)$-modules \cite{KL}, the positivity of parabolic Kazhdan-Lusztig polynomials for Weyl groups associated to symmetrizable Kac--Moody Lie algebras \cite{KT}, and certain divisibility properties of generic Weyl dimension (Laurent) polynomials. These divisibility properties, which explain part of the title of this paper, are presented in Section \ref{section:difgendim}. The main result is proved in Section \ref{section:quantumsupport}.

In Section \ref{section:poschar} we establish that the calculation of support varieties for irreducible modules holds when $\uzg$ is replaced by the restricted enveloping algebra $u(\g)$ (or, equivalently, the first Frobenius kernel $G_1$) associated to a simple, simply-connected algebraic group $G$ defined over an algebraically closed field $k$ of positive characteristic $p$. The calculation assumes that $p\geq h$, \emph{and that the modular Lusztig character formula holds for $G$ for all restricted dominant weights.} The latter result is known to be true for $p$ sufficiently large, depending on the root system $\Phi$ of $\g$; see \cite{AJS} and the subsequent work of Fiebig \cite{F}, which provides explicit (and large) bounds. Some instances for which the Lusztig character formula is known to be true are listed in Section \ref{section:poschar}. At present, it is still expected that Lusztig's original bound of $p \geq h$ will hold true.

Some evidence for the calculations in this paper of support varieties for irreducible modules exists already in the literature. The cases in which the high weight of $L$ is regular or lies on a single wall (i.e., the subregular case) were established in \cite{PW}. In fact, the results in \cite{PW} on the generic dimension motivate the results of Section \ref{section:difgendim}. For irreducible $G$-modules having \emph{regular} high weights, the calculation given in Section \ref{section:poschar} has already been shown in \cite{NPV} and was attributed there to Jantzen. It has been known for some time (cf.\ \cite{CPS}) that the validity of the Lusztig character formula completely determines, through parity considerations, the groups $\Ext_A^\bullet(L,L)$, for $A \in \{G_1,\uzg\}$ and $L,L'$ irreducible $A$-modules with regular high weights. In fact, the dimensions of these cohomology groups are given in terms of Kazhdan-Lusztig polynomials. Conversely, essentially no results are known for these $\Ext$-groups when $L,L'$ have singular high weights. Thus, from this viewpoint, it seems remarkable that the support varieties for \emph{all} irreducible modules can be determined explicitly in the quantum case or in the modular case assuming the validity of the Lusztig character formula.

\bigskip

\begin{center} {\bf Some preliminary notation and conventions} \end{center}

All the following notation is standard.

\begin{enumerate}
\item $\g$: finite-dimensional, simple complex Lie algebra.

\item $\fh$: Cartan subalgebra or a maximal toral subalgebra.

\item $\Phi$: corresponding (irreducible) root system.

\item $\Pi=\{ \alpha_1,\ldots,\alpha_n \},\Phi^+,\alpha_0$: , simple roots, positive roots, and maximal short root.

\item $\fb$: Borel subalgebra $\fb$ of $\g$ consisting of the span of negative root vectors and $\fh$, and opposite to $\fb^{+}$ (span of positive root vectors and $\fh$).

\item $\E$: Euclidean space spanned by $\Phi$.

\item $Q = \Z\Phi$: root lattice in $\E$

\item $\subgrp{\cdot,\cdot}$: inner product on $\E$, normalized so that $\subgrp{\alpha,\alpha^\vee}=2$ if $\alpha \in \Phi$ is any short root.

\item $\rho$: Weyl weight defined by $\rho=\frac{1}{2}\sum_{\alpha\in\Phi^+}\alpha$.

\item $h= \subgrp{\rho,\alpha_0^\vee}+1$: Coxeter number of $\Phi$.

\item $\alpha^\vee=2\alpha/\subgrp{\alpha,\alpha}$: coroot of $\alpha\in \Phi$.

\item $X=\Z \varpi_1 \oplus \cdots \oplus \Z \varpi_n$: weight lattice in $\E$, where the fundamental dominant weights $\varpi_i$ are defined by $\langle\varpi_i,\alpha_j^\vee\rangle=\delta_{ij}$, $1\leq i,j\leq n$.

\item $X^+=\N\varpi_1+\cdots+\N\varpi_n$: cone of dominant weights.

\item $X^+_\ell=\{\lambda\in X^+: \subgrp{\lambda,\alpha^\vee} < \ell,\; \forall \alpha\in\Pi\}$: the set of $\ell$-restricted dominant weights.

\item $s_\beta: \E \rightarrow \E$ ($\beta \in \Phi$): orthogonal reflection in the hyperplane $H_\beta$ of vectors orthogonal to $\beta$.

\item $W \subset {\mathbb O}({\mathbb E})$: Weyl group of $\Phi$, generated by the orthogonal reflections $\set{s_{\alpha_1},\ldots,s_{\alpha_n}}$.

\item $W_\ell = Q \rtimes W$: affine Weyl group, generated by the affine reflections $s_{\alpha,r}:{\mathbb E}\to{\mathbb E}$ defined by $s_{\alpha,r}(x)=x-[\langle x,\alpha^\vee\rangle-rl]\alpha$, $\alpha\in\Phi$, $r\in\mathbb Z$. For $\theta\in Q$, let $t_{\ell\theta}:{\mathbb E}\to{\mathbb E}$ be the translation operator in $W_\ell$ given by $x\mapsto x+\ell\theta$. The affine Weyl group $W_\ell$ is a Coxeter group with fundamental system $S_\ell=\{s_{\alpha_1},\cdots, s_{\alpha_n}\}\cup\{s_{\alpha_0,-1}\}$.

\item $l:W_\ell\to\mathbb N$: usual length function on $W_\ell$.

\item $l|_W$: length function on the parabolic subgroup $W$ of $W_\ell$.

\item $\Phi_{\lambda,\ell} =\{\alpha\in\Phi : \subgrp{\lambda+\rho,\alpha^\vee} \equiv 0 \mod \ell \}$ for $\lambda \in X$. When $\ell$ is clear from context, denote $\Phi_{\lambda,\ell}$ simply by $\Phi_\lambda$. Set $\Phi_\lambda^+ = \Phi^+ \cap \Phi_\lambda$.

\item $C^-=\{\lambda\in {\mathbb E} : -\ell< \langle\lambda+\rho,\alpha^\vee\rangle< 0,\,\forall \alpha\in\Phi^+\}$. The closure $\overline{C^-}$ is a fundamental domain for the ``dot" action $\cdot$ of $W_\ell$ on $\mathbb E$ (which is defined by $w \cdot \lambda= w(\lambda+\rho)-\rho$). Let $C^-_\Z=C^-\cap X$ and $\CZ=\overline{C^-} \cap X$.

\item $\chi(\lambda)=\sum_{w\in W}(-1)^{l(w)}e(w(\lambda+\rho))/\sum_{w\in W}(-1)^{l(w)}e(w\rho)\in\Z[X]$ for $\lambda\in X$: This is Weyl's character formula if $\lambda\in X^+$.

\item $\Psi_\ell(t)\in\Z[t]$: cyclotomic polynomial for a primitive $\ell$-th root of unity $\zeta \in \C$.
\end{enumerate}

\bigskip

Throughout this paper we will assume that {\it $\ell$ is an odd positive integer, $l> h$, and $(l,r)=1$ for all bad primes $r$ of $\Phi$}. In Section \ref{section:poschar} we also assume that $\ell = p$ is a prime integer. The assumption that $(l,r)=1$ whenever $r$ is bad for $\Phi$ guarantees that $\Phi_\lambda$ is a closed subroot system of $\Phi$, and that there exists $w\in W$ and a subset $J\subseteq \Pi$ such that $\Phi_\lambda= w(\Phi_J)$, where $\Phi_J=\Z J \cap \Phi$.

\section{Differentiating the generic dimension} \label{section:difgendim}

For $\theta\in Q$, write $\theta=\sum_{i=1}^nm_i\alpha_i$ ($m_i \in \Z$). The \emph{height} of $\theta$ is defined by $\heig(\theta)=\sum_{i=1}^n m_i$. A {\it weighted} height on $X$ will be defined and used later. We require the following elementary result.

\begin{lem}\label{firstlemma}
For $\beta\in\Phi^+$, $l(s_\beta)< 2\heig(\beta)$.
\end{lem}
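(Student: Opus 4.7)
The plan is to prove the stronger statement $l(s_\beta)\le 2\heig(\beta)-1$ by induction on $\heig(\beta)$, which immediately implies the claimed strict inequality. The base case is $\heig(\beta)=1$, i.e.\ $\beta=\alpha_i\in\Pi$, where $s_\beta=s_{\alpha_i}$ is a simple reflection of length $1$, and $2\heig(\beta)-1=1$.

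For the inductive step, suppose $\heig(\beta)\ge 2$. I would first locate a simple root $\alpha_i\in\Pi$ with $\langle\beta,\alpha_i^\vee\rangle>0$. Such an $\alpha_i$ must exist because $\beta$ has positive norm: expanding $\langle\beta,\beta\rangle=\sum_i m_i\langle\beta,\alpha_i\rangle>0$ with $\beta=\sum m_i\alpha_i$ and $m_i\ge 0$ forces some inner product $\langle\beta,\alpha_i\rangle$, and hence $\langle\beta,\alpha_i^\vee\rangle$, to be strictly positive. Since $\beta$ is a positive root different from $\alpha_i$ (as $\heig(\beta)\ge 2$), the root $\beta':=s_i(\beta)=\beta-\langle\beta,\alpha_i^\vee\rangle\alpha_i$ is again in $\Phi^+$, and $\heig(\beta')=\heig(\beta)-\langle\beta,\alpha_i^\vee\rangle\le \heig(\beta)-1$.

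Now I would use the standard conjugation identity $s_\beta=s_i\,s_{\beta'}\,s_i$, which together with subadditivity of the Coxeter length gives $l(s_\beta)\le l(s_{\beta'})+2$. Applying the inductive hypothesis to $\beta'$ yields
\[
l(s_\beta)\le (2\heig(\beta')-1)+2 \le 2\heig(\beta)-1 < 2\heig(\beta),
\]
completing the induction. Here I am using that the length of $s_\beta$ in $W_\ell$ coincides with its length in the parabolic subgroup $W$, which is standard for parabolic subgroups of Coxeter groups.

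The only mildly subtle step is the existence of the descent simple root $\alpha_i$ with $\langle\beta,\alpha_i^\vee\rangle>0$, but as indicated this follows from positivity of the Euclidean norm and can be carried out in one line. Everything else is a routine application of well-known identities in the Weyl group, so I do not anticipate any serious obstacle.
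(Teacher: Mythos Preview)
Your proof is correct and follows essentially the same induction on $\heig(\beta)$ as the paper, using the conjugation identity $s_\beta=s_{\alpha_i}s_{\beta'}s_{\alpha_i}$ with a simple root $\alpha_i$ that lowers height. The only cosmetic difference is that the paper verifies the exact equality $l(s_\beta)=l(s_{\beta'})+2$ (from $\langle\beta',\alpha_i\rangle<0$), whereas you use only the subadditivity bound $l(s_\beta)\le l(s_{\beta'})+2$, which already suffices.
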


\begin{proof}
The result is true if $\heig(\beta)=1$, i.e., if $\beta\in\Pi$. So assume that $\heig(\beta)>1$, and that the result is true for positive roots of smaller height. Choose $\alpha \in \Pi$ so that $s_\alpha(\beta)=\gamma\in\Phi^+$ with $\heig(\gamma)<\heig(\beta)$. Then $s_\beta=s_\alpha s_\gamma s_\alpha$. Since $\langle\gamma,\alpha\rangle<0$, it is easily verified that $l(s_\beta)=l(s_\gamma)+2$. Then
\[
l(s_\beta)=l(s_\gamma)+2<2\heig(\gamma)+2\leq 2\heig(\beta),
\]
as required.
\end{proof}

Fix $\lambda^-\in \CZ$ throughout this section. An element $w \in W_\ell$ is called dominant for $\lambda^-$ provided that $w \cdot \lambda^-\in X^+$. Write $w=t_{\ell\theta} x$ with $\theta \in Q$, $x \in W$. Since $x(\lambda^-+\rho)+\ell \theta \in \rho + X^+$, and
\[
\abs{ \subgrp{ x(\lambda^- + \rho),\alpha^\vee } } = \abs{ \subgrp{ \lambda^-+\rho,x^{-1}\alpha^\vee } } \leq \ell, \, \forall \alpha \in \Phi,
\]
it follows that $\theta\in X^+$. In addition, $w$ is called \emph{minimal} dominant for $\lambda^-$ if it has minimal length among all $y\in W_\ell$ such that $y \cdot\lambda^-= w \cdot \lambda^-$.

Let $w=t_{\ell\theta}x\in W_\ell$ with $\theta\in Q$, $x \in W$. If $\theta\in X^+$, then it follows that
\begin{equation} \label{lengthformula}
\ell(w)=\ell(x) + 2\heig(\theta).
\end{equation}
This result is proved in \cite[Prop. 1.23]{IM}. A routine adjustment must be made in the formula given there, since a different set of fundamental reflections for $W_\ell$ is used.

\begin{lem} \label{secondlem}
Let $w=t_{\ell\theta}x$ be minimal dominant for $\lambda^-\in\CZ$. Given $\alpha\in\Phi_{\lambda^-}^+$, $x\alpha\in -\Phi^+$ if and only if $\langle\lambda^-+\rho,\alpha^\vee\rangle=-\ell$.
\end{lem}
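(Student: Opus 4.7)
Since $\lambda^-\in\CZ$, the inner product $\langle\lambda^-+\rho,\alpha^\vee\rangle$ lies in $[-\ell,0]$ for every $\alpha\in\Phi^+$, so for $\alpha\in\Phi_{\lambda^-}^+$ the only possibilities are $\langle\lambda^-+\rho,\alpha^\vee\rangle=0$ or $-\ell$. The plan is to handle these two values separately, deriving in each case an inequality from the minimality of $w$ and the length formula (\ref{lengthformula}), and then reading off the sign of $x\alpha$.

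In the first subcase the reflection $s_\alpha\in W$ fixes $\lambda^-$ under the dot action, so $ws_\alpha=t_{\ell\theta}(xs_\alpha)$ is again dominant for $\lambda^-$ with translation part $\theta\in X^+$. The minimality inequality $\ell(w)\le\ell(ws_\alpha)$ then collapses via (\ref{lengthformula}) to $\ell(x)\le\ell(xs_\alpha)$, which forces $x\alpha\in\Phi^+$, i.e., $x\alpha\notin -\Phi^+$.

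In the second subcase the affine reflection $s_{\alpha,-1}=t_{-\ell\alpha}s_\alpha$ fixes $\lambda^-$. A one-line computation conjugating the translation past $x$ rewrites $ws_{\alpha,-1}$ as $t_{\ell(\theta-x\alpha)}(xs_\alpha)$; since this element is again dominant for $\lambda^-$, its translation part $\theta-x\alpha$ lies in $X^+$, and (\ref{lengthformula}) applies. The minimality inequality $\ell(w)\le\ell(ws_{\alpha,-1})$ now reduces to
\[
\ell(x)+2\heig(x\alpha)\le\ell(xs_\alpha).
\]
I would finish by assuming for contradiction that $x\alpha\in\Phi^+$, and then bounding $\ell(xs_\alpha)=\ell(s_{x\alpha}x)\le\ell(s_{x\alpha})+\ell(x)<2\heig(x\alpha)+\ell(x)$ using Lemma \ref{firstlemma} applied to the positive root $x\alpha$. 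This directly contradicts the displayed inequality, so $x\alpha\in -\Phi^+$.

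The two subcases exhaust $\Phi_{\lambda^-}^+$, so both directions of the stated equivalence fall out simultaneously. The main obstacle I anticipate is just keeping the bookkeeping straight in the second subcase, namely the identity $xt_{-\ell\alpha}x^{-1}=t_{-\ell x\alpha}$ and the verification that $\theta-x\alpha\in X^+$; that bookkeeping is precisely what lets Lemma \ref{firstlemma} bite, which presumably motivated its inclusion.
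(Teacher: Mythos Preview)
Your proposal is correct and follows essentially the same argument as the paper: split into the two cases $\langle\lambda^-+\rho,\alpha^\vee\rangle\in\{0,-\ell\}$, use minimality together with the length formula \eqref{lengthformula} in each, and in the $-\ell$ case reach a contradiction via Lemma~\ref{firstlemma} applied to the (assumed positive) root $x\alpha$. The only cosmetic difference is that the paper introduces the assumption $x\alpha>0$ before checking $\theta-x\alpha\in X^+$, whereas you (equivalently) extract that dominance from the discussion preceding \eqref{lengthformula}; also, be careful to write $l(\cdot)$ rather than $\ell(\cdot)$ for the length function to avoid clashing with the integer $\ell$.
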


\begin{proof}
Suppose that $\alpha \in \Phi_{\lambda^-}^+$. Then $\langle\lambda^-+\rho,\alpha^\vee\rangle\in\{0,-\ell\}$. If this value is $0$, then $xs_\alpha \cdot \lambda^-=x \cdot \lambda^-$. By hypothesis on $w$, this means that
\[ 2\heig(\theta)+ l(x) = l(w) < l(ws_\alpha)=2\heig(\theta) +l(xs_\alpha), \]
so that $l(xs_\alpha)>l(x)$, and hence $x\alpha>0$. Thus, if $x\alpha<0$, then necessarily $\langle\lambda^-+\rho,\alpha^\vee\rangle=-\ell$. Conversely, assume that $\langle\lambda^-+\rho,\alpha^\vee\rangle=-\ell$. We will show that $x\alpha<0$. Suppose otherwise, viz., $x\alpha>0$. We have $s_{\alpha,-1}\cdot\lambda^-=\lambda^-$. Also,
\[ ws_{\alpha,-1}=t_{\ell\theta}xt_{-\ell\alpha}s_\alpha = t_{\ell\theta-\ell x\alpha}xs_\alpha. \]
Since $ws_{\alpha,-1} \cdot \lambda^- = w \cdot \lambda^-$ is dominant, $\theta - x\alpha$ must be dominant. Then, by \eqref{lengthformula}, $ws_{\alpha,-1}$ has length equal to $l(xs_\alpha) + 2\heig(\theta-x\alpha)$. But
\begin{align*}
l(ws_{\alpha,-1}) &= l(xs_\alpha) + 2\heig(\theta -x\alpha) \\
&= l(s_{x\alpha}x)+2\heig(\theta)-2\heig(x\alpha) \\
&\leq l(x) + l(s_{x\alpha}) + 2\heig(\theta) -2\heig(x\alpha) <l(w),
\end{align*}
since Lemma \ref{firstlemma} guarantees that $l(s_{x\alpha})< 2\heig(x\alpha)$ when $x\alpha\in\Phi^+$. This inequality contradicts the minimality of $w$, so we must conclude that if $\subgrp{\lambda^-+\rho,\alpha^\vee} = -\ell$, then $x\alpha < 0$.
\end{proof}

Following \cite{PW}, we will use a weighted height function $\wheig: X \to \Z[\frac{1}{2}]$. For $\alpha \in \Phi$, let $d_\alpha=\subgrp{\alpha,\alpha}/2 =\subgrp{\alpha,\alpha}/\subgrp{\alpha_0,\alpha_0} \in \set{1,2,3}$. Given $\lambda = \sum_{\alpha \in \Pi} r_\alpha \alpha \in X$ ($r_\alpha\in\mathbb Q$), put
\begin{equation} \label{weightedheight}
\wheig(\lambda):=\sum_{\alpha\in\Pi}r_\alpha d_\alpha= \frac{2\subgrp{\lambda,\rho}}{\subgrp{\alpha_0,\alpha_0}} = \frac{1}{2} \sum_{\alpha \in \Phi^+} d_\alpha \langle \lambda,\alpha^\vee \rangle.
\end{equation}
See \cite[Lemma 1.1]{PW} for the verification that these quantities are all equal. Given a finite-dimensional $X$-graded vector space $V=\bigoplus_{\lambda\in X}V_\lambda$, its \emph{generic dimension} is the Laurent polynomial
\begin{equation} \label{genericdimensionformula}
\dim_t V := \sum_{\lambda \in X} (\dim V_\lambda) t^{-2\wheig(\lambda)}\in\Z[t,t^{-1}]
\end{equation}
We also put $\ch(V)=\sum_{\lambda\in X} (\dim V_\lambda) e(\lambda)$ for the character of $V$.

For $\lambda\in X$, set
\begin{equation} \label{theDs}
D_\lambda(t)=\prod_{\alpha \in \Phi^+} (t^{d_\alpha \subgrp{\lambda+\rho,\alpha^\vee}}-t^{-d_\alpha \subgrp{\lambda+\rho,\alpha^\vee}}) \in \Z[t,t^{-1}].
\end{equation}

\begin{lem} \label{lem:Dquotient} \cite[Theorem 1.3]{PW}
Suppose that $V$ is a finite-dimensional $X$-graded vector space such that $\ch(V)=\chi(\lambda)$ for some $\lambda\in X^+$.
Then
\begin{equation} \label{eq:genericdimension}
\dim_t V=D_\lambda(t)/D_0(t).
\end{equation}
\end{lem}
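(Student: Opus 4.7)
The plan is to apply the weighted-height specialization
\[ T: \Z[X] \longrightarrow \Z[t,t^{-1}], \qquad e(\mu)\longmapsto t^{-2\wheig(\mu)}, \]
directly to Weyl's character formula. Because $\wheig$ is additive on $X$, the map $T$ is a ring homomorphism, and by definition $T(\ch V) = \dim_t V$. Two preliminary identities will streamline the computation: the normalization $\subgrp{\alpha_0,\alpha_0}=2$ makes $\wheig(\mu)=\subgrp{\mu,\rho}$, and the identity $\subgrp{\mu,\alpha} = d_\alpha\subgrp{\mu,\alpha^\vee}$ (immediate from the definitions of $d_\alpha$ and $\alpha^\vee$) rewrites the exponents of $D_\lambda(t)$ as $d_\alpha\subgrp{\lambda+\rho,\alpha^\vee} = \subgrp{\lambda+\rho,\alpha}$. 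Writing $A_\mu = \sum_{w\in W}(-1)^{l(w)}e(w\mu)$, it then suffices by Weyl's formula $\chi(\lambda)\cdot A_\rho = A_{\lambda+\rho}$ to compute $T(A_\rho)$ and $T(A_{\lambda+\rho})$.

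Step 1 (the denominator). Apply $T$ to the Weyl denominator formula
\[ A_\rho = \prod_{\alpha\in\Phi^+}\bigl(e(\alpha/2)-e(-\alpha/2)\bigr). \]
Since $T(e(\alpha/2)) = t^{-\wheig(\alpha)} = t^{-\subgrp{\alpha,\rho}} = t^{-d_\alpha\subgrp{\rho,\alpha^\vee}}$, we obtain
\[ T(A_\rho) = \prod_{\alpha\in\Phi^+}\bigl(t^{-d_\alpha\subgrp{\rho,\alpha^\vee}}-t^{d_\alpha\subgrp{\rho,\alpha^\vee}}\bigr) = (-1)^{|\Phi^+|}D_0(t). \]

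Step 2 (the numerator). Compute
\[ T(A_{\lambda+\rho}) = \sum_{w\in W}(-1)^{l(w)}t^{-2\subgrp{w(\lambda+\rho),\rho}} = \sum_{w\in W}(-1)^{l(w)}t^{-2\subgrp{\lambda+\rho,w^{-1}\rho}}, \]
and reindex $w\mapsto w^{-1}$ (using $l(w^{-1})=l(w)$) to rewrite this as $\sum_{w}(-1)^{l(w)}t^{-2\subgrp{\lambda+\rho,w\rho}}$. The key observation is that this sum is $\phi_\lambda(A_\rho)$, where
\[ \phi_\lambda: e(\mu)\longmapsto t^{-2\subgrp{\lambda+\rho,\mu}} \]
is another ring homomorphism (from $\Z[\tfrac12X]$ to $\Z[t,t^{-1}]$; the image lies in $\Z[t,t^{-1}]$ because $2\subgrp{\lambda+\rho,\alpha/2}=\subgrp{\lambda+\rho,\alpha}=d_\alpha\subgrp{\lambda+\rho,\alpha^\vee}\in\Z$). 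Applying $\phi_\lambda$ to the product form of $A_\rho$ yields
\[ T(A_{\lambda+\rho}) = \prod_{\alpha\in\Phi^+}\bigl(t^{-\subgrp{\lambda+\rho,\alpha}}-t^{\subgrp{\lambda+\rho,\alpha}}\bigr) = (-1)^{|\Phi^+|}D_\lambda(t). \]

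Step 3 (conclusion). Taking the ratio gives $\dim_t V = T(\chi(\lambda)) = T(A_{\lambda+\rho})/T(A_\rho) = D_\lambda(t)/D_0(t)$, as desired. The conceptual content is simply that the ``weighted-height'' specialization of the alternating numerator $A_{\lambda+\rho}$ is, after reindexing, the specialization of the denominator $A_\rho$ by an inner-product pairing with $\lambda+\rho$, so both sums admit a product expansion via the Weyl denominator formula. The only real obstacle is bookkeeping — verifying that $\phi_\lambda$ lands in $\Z[t,t^{-1}]$, keeping careful track of the sign $(-1)^{|\Phi^+|}$ (which cancels in the quotient), and confirming the exponent-conversion identity $\subgrp{\mu,\alpha}=d_\alpha\subgrp{\mu,\alpha^\vee}$ under the chosen normalization.
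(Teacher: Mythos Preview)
The paper does not give its own proof of this lemma; it simply quotes the result from \cite[Theorem~1.3]{PW} and moves on. So there is nothing in the present paper to compare against.

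Your argument is correct and self-contained. The specialization map $T$ and the auxiliary homomorphism $\phi_\lambda$ are well-defined on the half-lattice needed to make sense of the product form of $A_\rho$, the exponent identity $d_\alpha\langle\mu,\alpha^\vee\rangle=\langle\mu,\alpha\rangle$ holds under the paper's normalization $\langle\alpha_0,\alpha_0\rangle=2$ (so that $d_\alpha\alpha^\vee=\alpha$), and the reindexing $w\mapsto w^{-1}$ together with $W$-invariance of the form is exactly what converts $T(A_{\lambda+\rho})$ into $\phi_\lambda(A_\rho)$. The one step you leave implicit is that the equality $T(\chi(\lambda))\cdot T(A_\rho)=T(A_{\lambda+\rho})$ in $\Z[t,t^{-1}]$ can be divided through because $D_0(t)$ is a nonzero Laurent polynomial (each factor $t^m-t^{-m}$ has $m=d_\alpha\langle\rho,\alpha^\vee\rangle>0$); this is trivial but worth a word. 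Otherwise the proof is clean and is essentially the standard derivation of the $q$-analogue of Weyl's dimension formula, which is almost certainly what \cite{PW} does as well.
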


We call \eqref{eq:genericdimension} the Weyl generic dimension formula. Its value at $t=1$ gives Weyl's classical dimension formula for the irreducible $\g$-module of high weight $\lambda$.

Let $\lambda^-\in \CZ$ as before, and let $w=t_{\ell \theta}x$, $\theta\in X^+\cap Q$, $x\in W$, be minimal dominant for $\lambda^-$. Set $\lambda = w \cdot \lambda^-$, and set $s=|\Phi^+_\lambda|$. For $\alpha\in\Phi^+$, $2d_\alpha \langle \lambda + \rho,\alpha^\vee \rangle$ is divisible by $\ell$ if and only if $\alpha \in \Phi_\lambda^+$. Also, by our assumptions, $\ell$ does not divide any of the $2d_\alpha\langle\rho,\alpha^\vee\rangle$. It follows that the cyclotomic polynomial $\Psi_\ell(t)$ occurs as a factor of $t^{d_\alpha\langle\lambda+\rho,\alpha^\vee\rangle}-t^{-d_\alpha\langle\lambda+\rho,\alpha^\vee\rangle}$ in $\Z[t,t^{-1}]$ if and only if $\alpha\in\Phi^+_\lambda$, hence that $\Psi_\ell(t)$ occurs as a factor of $D_\lambda(t)$ in $\Z[t,t^{-1}]$ precisely $s$ times. In particular, $\Psi_\ell(t)$ is relatively prime to $D_0(t)$.

For the main result of this paper, we need to calculate the value at $t=\zeta$ of the Laurent polynomial
\begin{equation*}
D^{(s)}_\lambda(t):=\frac{d^s}{dt^s}D_\lambda(t),
\end{equation*}
obtained by differentiating $D_\lambda(t)$ $s$ times.

\begin{thm} \label{maincalculation}
Fix $\lambda^-\in \CZ$. Let $w=t_{\ell\theta}x\in W_\ell$ be minimal dominant for $\lambda^-$, and put $\lambda=w\cdot\lambda^-$. Set $s = \abs{\Phi_\lambda^+}$, and set
\[ a_{\lambda^-}=\abs{ \set{ \alpha\in\Phi^+_{\lambda^-} : \subgrp{ \lambda^-+\rho,\alpha^\vee}=-\ell } }. \]
Then
\begin{multline*}
0 \neq D^{(s)}_\lambda(\zeta) = \\
(-1)^{l(w) - (a_{\lambda^-})} (s!)
\left(\prod_{\alpha \in \Phi^+_\lambda} 2d_\alpha \subgrp{\lambda+\rho,\alpha^\vee} \zeta^{-1} \right)
\left(\prod_{\alpha \in \Phi^+ \backslash \Phi^+_{\lambda^-}} \zeta^{d_\alpha \subgrp{\lambda^-+\rho,\alpha^\vee}}-\zeta^{-d_\alpha \subgrp{\lambda^-+\rho, \alpha^\vee}} \right).
\end{multline*}
\end{thm}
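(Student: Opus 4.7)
The plan is to factor $D_\lambda(t) = \prod_{\alpha \in \Phi^+} f_\alpha(t)$, where $f_\alpha(t) := t^{d_\alpha \langle \lambda+\rho,\alpha^\vee\rangle} - t^{-d_\alpha\langle\lambda+\rho,\alpha^\vee\rangle}$, and separate the factors according to whether $\alpha \in \Phi_\lambda^+$ or not. As noted in the discussion just before the theorem, exactly the $s$ factors indexed by $\Phi_\lambda^+$ vanish at $\zeta$, while the remaining factors are nonzero there. When one applies the generalized Leibniz rule to compute $D_\lambda^{(s)}(\zeta)$, the only surviving contribution is the term in which each of the $s$ vanishing factors receives exactly one derivative and every other factor receives none; the multinomial coefficient of that term is $s!$, so
\[ D_\lambda^{(s)}(\zeta) \;=\; s!\,\Bigl(\prod_{\alpha\in\Phi_\lambda^+} f_\alpha'(\zeta)\Bigr)\Bigl(\prod_{\alpha\in\Phi^+\setminus\Phi_\lambda^+} f_\alpha(\zeta)\Bigr). \]
Direct differentiation together with $\zeta^\ell = 1$ gives $f_\alpha'(\zeta)=2d_\alpha\langle\lambda+\rho,\alpha^\vee\rangle\,\zeta^{-1}$ for $\alpha\in\Phi_\lambda^+$, reproducing the middle factor of the asserted identity.

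Next I would rewrite the remaining product in terms of $\lambda^-$. Because $\lambda+\rho = x(\lambda^-+\rho) + \ell\theta$ and $x\in W$ preserves root lengths, the congruence $\zeta^{d_\alpha\langle\lambda+\rho,\alpha^\vee\rangle} = \zeta^{d_{x^{-1}\alpha}\langle\lambda^-+\rho,(x^{-1}\alpha)^\vee\rangle}$ holds. I then reindex by $\beta=x^{-1}\alpha$ when $x^{-1}\alpha\in\Phi^+$ and by $\beta=-x^{-1}\alpha$ when $x^{-1}\alpha\in -\Phi^+$, picking up a factor $-1$ in the latter case from $\zeta^{-k}-\zeta^k = -(\zeta^k-\zeta^{-k})$. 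Since $x^{-1}\Phi_\lambda=\Phi_{\lambda^-}$, this reindexing is a bijection between $\Phi^+\setminus\Phi_\lambda^+$ and $\Phi^+\setminus\Phi_{\lambda^-}^+$, and the product acquires the desired shape up to an overall sign $(-1)^{|N|}$, where $N:=\{\alpha\in\Phi^+\setminus\Phi_\lambda^+ : x^{-1}\alpha<0\}$.

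The main obstacle is matching $(-1)^{|N|}$ with $(-1)^{l(w)-a_{\lambda^-}}$. Using the classical identity $l(x) = |\{\alpha\in\Phi^+:x^{-1}\alpha<0\}|$, one reduces modulo $2$ to $|N| \equiv l(x) + |\{\alpha\in\Phi_\lambda^+ : x^{-1}\alpha<0\}|\pmod{2}$. To evaluate the remaining cardinality I partition $\Phi_{\lambda^-}^+ = A\cup B$ with $A=\{\beta : \langle\lambda^-+\rho,\beta^\vee\rangle=0\}$ and $B=\{\beta : \langle\lambda^-+\rho,\beta^\vee\rangle=-\ell\}$. Lemma \ref{secondlem} forces $xA\subset\Phi^+$ and $xB\subset -\Phi^+$, so $\Phi_\lambda^+ = xA\cup(-xB)$ and $\{\alpha\in\Phi_\lambda^+ : x^{-1}\alpha<0\} = -xB$, of size $|B|=a_{\lambda^-}$. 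Combined with $l(w)=l(x)+2\heig(\theta)$ from \eqref{lengthformula}, this yields $|N| \equiv l(w)-a_{\lambda^-}\pmod 2$, as required.

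Nonvanishing is then automatic: each $f_\alpha'(\zeta)$ with $\alpha\in\Phi_\lambda^+$ is a nonzero multiple of $\zeta^{-1}$ (since $\langle\lambda+\rho,\alpha^\vee\rangle$ is a positive multiple of $\ell$), and each factor $\zeta^{d_\beta\langle\lambda^-+\rho,\beta^\vee\rangle} - \zeta^{-d_\beta\langle\lambda^-+\rho,\beta^\vee\rangle}$ with $\beta\in\Phi^+\setminus\Phi_{\lambda^-}^+$ is nonzero, because its exponent is not a multiple of $\ell$; this last point uses that $\ell$ is odd and coprime to every $d_\beta\in\{1,2,3\}$, as the standing hypothesis imposes $(\ell,r)=1$ for each bad prime $r$ of $\Phi$.
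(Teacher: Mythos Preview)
Your argument is correct and follows essentially the same route as the paper: factor $D_\lambda$ into the $f_\alpha$'s, apply the generalized Leibniz rule so that only the term differentiating each $f_\alpha$ with $\alpha\in\Phi_\lambda^+$ once survives at $t=\zeta$, reindex the remaining product via $x^{-1}$, and count the resulting sign changes using Lemma~\ref{secondlem}. Your treatment is in fact slightly more explicit than the paper's (you spell out the bijection $\Phi^+\setminus\Phi_\lambda^+\to\Phi^+\setminus\Phi_{\lambda^-}^+$ and the partition $\Phi_{\lambda^-}^+=A\cup B$, and you correctly note that the sign count $|N|=l(x)-a_{\lambda^-}$ matches $l(w)-a_{\lambda^-}$ only modulo $2$ via \eqref{lengthformula}), but the strategy is identical.
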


\begin{proof}
Write $f_\alpha(t)= t^{d_\alpha\langle\lambda+\rho,\alpha^\vee\rangle}-t^{-d_\alpha\langle\lambda+\rho,\alpha^\vee\rangle}$, so that
$D_\lambda(t)=\prod_{\alpha\in\Phi^+} f_\alpha(t)$. If $(d^i/dt^i) f_\alpha(t)$ is denoted by $f_\alpha^{(i)}(t)$, then $D^{(s)}_\lambda(t)$ is a sum of terms
\begin{equation} \label{derivative}
\left[ s!/(\prod i_\alpha !) \right] \cdot \prod f_\alpha^{(i_\alpha)}(t)
\end{equation}
over distinct sequences $(i_\alpha)_{\alpha\in\Phi^+}$ of non-negative integers $i_\alpha$ summing to $s$. Since $\Psi_\ell(t)$ divides $f_\alpha(t)$ precisely when $\alpha \in \Phi_\lambda^+$ (and then divides it with multiplicity one), the only terms in \eqref{derivative} that do not vanish upon the substitution $t=\zeta$ are those in which $i_\alpha=1$ for all $\alpha\in\Phi^+_\lambda$ (and thus $i_\alpha=0$ for all $\alpha\in \Phi^+\backslash\Phi^+_\lambda$). However,
\[
f_\alpha^{(1)}(t) = f'_\alpha(t) = d_\alpha \langle\lambda+\rho,\alpha^\vee\rangle (t^{d_\alpha \langle \lambda+\rho,\alpha^\vee \rangle-1}+t^{-d_\alpha\langle \lambda+\rho,\alpha^\vee \rangle-1}),
\]
so that, for $\alpha\in \Phi^+_\lambda$,
\[ f'_\alpha(\zeta)=2d_\alpha\langle\lambda+\rho,\alpha^\vee\rangle \zeta^{-1}. \]
Furthermore, each $\langle\lambda+\rho,\alpha^\vee\rangle$ is a positive integer because $\lambda\in X^+$.

Next, for $\alpha \in \Phi^+ \backslash \Phi^+_\lambda$,
\[ \zeta^{d_\alpha\langle\lambda+\rho,\alpha^\vee\rangle}=\zeta^{d_\alpha\langle x(\lambda^-+\rho),\alpha^\vee\rangle} \zeta^{d_\alpha \ell\langle\theta,\alpha^\vee \rangle}=\zeta^{d_\alpha\langle\lambda^-+\rho,x^{-1}\alpha^\vee\rangle}. \]
Observe that $\alpha \notin \Phi_\lambda$ implies that $x^{-1}\alpha \notin \Phi_{\lambda^-}$. If $x^{-1}\alpha<0$, then we write the (non-zero) quantity
\[
\zeta^{d_\alpha\langle\lambda^-+\rho,x^{-1}\alpha^\vee\rangle}-\zeta^{-d_\alpha\langle\lambda^-+\rho, x^{-1}\alpha^\vee\rangle}
\]
as
\[ - \left( \zeta^{d_\alpha\langle\lambda^-+\rho,-x^{-1}\alpha^\vee\rangle}-\zeta^{-d_\alpha\langle\lambda^-+\rho, -x^{-1}\alpha^\vee\rangle} \right). \]
By Lemma \ref{secondlem}, there are $l(w)-(a_{\lambda^-})$ such sign changes. The theorem now follows.
\end{proof}

In the next section, it will be convenient to write
\begin{equation} \label{basicformula}
E_{\lambda^-}(\zeta) = \prod_{\alpha\in\Phi^+\backslash\Phi^+_{\lambda^-}}(\zeta^{d_\alpha\langle\lambda^-+\rho,\alpha^\vee\rangle}-\zeta^{-d_\alpha\langle\lambda^-+\rho, \alpha^\vee\rangle}).
\end{equation}

\section{Support varieties for quantum irreducible modules}\label{section:quantumsupport}

For $\lambda\in X^+$, let $L_\zeta(\lambda)$ be the irreducible, type 1 integrable $U_\zeta(\g)$-module of highest weight $\lambda$. Similarly, let $\Delta_\zeta(\lambda)$ and $\nabla_\zeta(\lambda)$ denote the standard and costandard (i.e., Weyl and induced) modules of highest weight $\lambda$. Then $\Delta_\zeta(\lambda)$ (resp.\ $\nabla_\zeta(\lambda)$) has head (resp.\ socle) $L_\zeta(\lambda)$, with all other composition factors $L_\zeta(\mu)$ satisfying $\mu<\lambda$ in the usual partial ordering on $X$. Furthermore, if $L_\zeta(\mu)$ is a composition factor of $\Delta_\zeta(\lambda)$ (resp.\ $\nabla_\zeta(\lambda)$), then $\mu$ is linked to $\lambda$ (i.e., $\mu$ is conjugate to $\lambda$ under the dot action of $W_\ell$), and hence $\Phi_\lambda$ and $\Phi_\mu$ are $W$-conjugate.

Recall that $W_\ell$ is generated as a group by the fundamental system $S_\ell \subset W_\ell$. Given $I \subseteq S_\ell$, set $W_{\ell,I} = \subgrp{I} \leq W_\ell$, and set $W_\ell^I = \set{w \in W_\ell : l(w) \leq l(ws) \, \forall \, s \in W_{\ell,I}}$. Let $\leqslant$ denote the Chevalley--Bruhat partial ordering on $W_\ell$. Given $y \leqslant w$ in $W_\ell$, $P_{y,w}(q)$ is the Kazhdan--Lusztig polynomial associated to the pair $(y,w)$. In \cite{Deo}, Deodhar introduced two generalizations of the $P_{y,w}$'s, called parabolic Kazhdan--Lusztig polynomials, which depend on a choice of subset $I \subseteq S_\ell$, and a choice of a root $u$ of the equation $u^2=q+(q-1)u$, i.e., $u=-1$ or $u=q$. Given $I \subseteq S_\ell$, and given $(y,w) \in W_\ell^I \times W_\ell^I$ with $y \leqslant w$, the parabolic Kazhdan--Lusztig polynomial $P_{y,w}^{I,-1}$ associated to the root $u=q$ is related to the usual Kazhdan--Lusztig polynomials by the following equation \cite[Remark 3.8]{Deo}:
\begin{equation} \label{parabolicKLpolynomial}
P_{y,w}^{I,-1} = \sum_{x \in W_I,yx \leqslant w} (-1)^{l(x)} P_{yx,w}.
\end{equation}
We are following the notational convention of \cite{KT}, so the superscript in $P_{y,w}^{I,a}$ indicates the opposite root
of the equation $u^2=q+(q-1)u$; see \cite[Remark 2.1]{KT}. If $y \nleqslant w$, then $P_{y,w}^{I,-1} = 0$. By \cite[Corollary 4.1]{KT}, the coefficients of the $P_{y,w}^{I,-1}$ are non-negative integers. In fact, the coefficients are interpreted there as multiplicities of composition factors in Hodge modules associated to Schubert varieties (an alternative approach using affine Hecke algebras is provided in \cite{GH}).

Fix $\lambda^-\in \CZ$. The stabilizer in $W_\ell$ of $\lambda^-$ is defined by $W_{\ell,\lambda^-} = \set{ w \in W_\ell \,|\, w \cdot \lambda^- = \lambda^-}$; it is generated as a group by the set $I:= W_{\ell,\lambda^-} \cap S_\ell$ \cite[II.6.3]{Jan}. Then $W_{\ell,\lambda^-} = W_{\ell,I} := \subgrp{I} \leq W_\ell$ is a parabolic subgroup of $W_\ell$. If $w \in W_{\ell}$ is minimal dominant for $\lambda^-$, then $w \in W_\ell^I$.

\begin{prop} \label{KLformula}
Let $w \in W_\ell$ be minimal dominant for $\lambda^-$, and write $\lambda = w \cdot \lambda^-$. Let $I \subseteq S_\ell$ be such that $W_{\ell,\lambda^-} = W_{\ell,I}$. Then
\begin{equation}
\ch L_\zeta(\lambda) = \sum_{y \in W_\ell^I} (-1)^{l(w)-l(y)} P_{y,w}^{I,-1}(1) \ch \Delta_\zeta(y \cdot \lambda^-).
\end{equation}
\end{prop}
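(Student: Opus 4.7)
My plan is to reduce to the regular case of the Lusztig character formula for $U_\zeta(\g)$ and then apply a translation functor, using \eqref{parabolicKLpolynomial} to repackage the coefficients as parabolic Kazhdan--Lusztig polynomials. Choose a regular weight $\mu^-\in C^-_\Z$ sufficiently deep in $\overline{C^-}$ that, for every $y\leqslant w$, the element $y\cdot\mu^-$ lies in $X^+$. The Lusztig character formula at $\zeta$ \cite{KL} then reads
\[
\ch L_\zeta(w\cdot\mu^-)=\sum_{y\leqslant w}(-1)^{l(w)-l(y)}P_{y,w}(1)\,\ch\Delta_\zeta(y\cdot\mu^-).
\]

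Next, I would apply the translation functor $T^{\lambda^-}_{\mu^-}$ to both sides. Standard properties of the translation principle (the quantum analogues of \cite[II.7]{Jan}) yield $T^{\lambda^-}_{\mu^-}L_\zeta(w\cdot\mu^-)=L_\zeta(\lambda)$, because $w\in W_\ell^I$, and $T^{\lambda^-}_{\mu^-}\Delta_\zeta(y\cdot\mu^-)=\Delta_\zeta(y\cdot\lambda^-)$ for every $y$ appearing in the sum. Now each $y\in W_\ell$ factors uniquely as $y=y'x$ with $y'\in W_\ell^I$ and $x\in W_{\ell,I}$, and this factorization satisfies $l(y)=l(y')+l(x)$; since $x$ fixes $\lambda^-$, one also has $y\cdot\lambda^-=y'\cdot\lambda^-$. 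Grouping the resulting sum by the coset $y'W_{\ell,I}$ yields
\[
\ch L_\zeta(\lambda)=\sum_{y'\in W_\ell^I}(-1)^{l(w)-l(y')}\Biggl(\sum_{\substack{x\in W_{\ell,I}\\ y'x\leqslant w}}(-1)^{l(x)}P_{y'x,w}(1)\Biggr)\ch\Delta_\zeta(y'\cdot\lambda^-),
\]
and the inner sum equals $P_{y',w}^{I,-1}(1)$ by \eqref{parabolicKLpolynomial}, producing the stated identity.

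The principal obstacle is making the translation step rigorous in the quantum context: one must verify that a single $\mu^-$ can be chosen so that $y\cdot\mu^-\in X^+$ for all relevant $y\leqslant w$, and that $T^{\lambda^-}_{\mu^-}$ transports simples and Weyl modules as claimed---in particular, that contributions from $y\notin W_\ell^I$ are killed on the nose rather than merely cancelling after translation. These translation-functor assertions are standard in the modular setting and carry over to $U_\zeta(\g)$-modules, but extracting them cleanly from the literature is the most technical part of the argument. An alternative route that bypasses the translation bookkeeping is to invoke the parabolic character formula proved for highest-weight modules over affine Kac--Moody algebras at negative level in \cite{KT}, and then transport it to $U_\zeta(\g)$-modules via the Kazhdan--Lusztig/Kashiwara--Tanisaki equivalence.
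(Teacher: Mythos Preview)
Your overall strategy---reduce to the regular Lusztig formula, translate to the wall, and then regroup via \eqref{parabolicKLpolynomial}---is a legitimate route, and the coset regrouping in your last display is exactly the manipulation the paper performs. But the paper's proof is considerably shorter because it never passes through a regular weight or a translation functor: it simply quotes Tanisaki's character formula \cite[Theorem~6.4]{T}, which already expresses $\ch L_\zeta(\lambda)$ for singular $\lambda^-\in\CZ$ as an alternating sum of $\ch\Delta_\zeta(y\cdot\lambda^-)$ with ordinary Kazhdan--Lusztig coefficients, and then regroups over cosets $y'W_{\ell,I}$ using \eqref{parabolicKLpolynomial}. In other words, your ``alternative route'' in the last paragraph is essentially the paper's actual proof.

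There is also a genuine slip in your main argument. No choice of $\mu^-\in C^-_\Z$ can be made ``sufficiently deep'' so that $y\cdot\mu^-\in X^+$ for every $y\leqslant w$: the identity element always satisfies $e\leqslant w$, and $e\cdot\mu^-=\mu^-$ lies in the strictly antidominant alcove $C^-$, hence is never in $X^+$. Consequently the regular Lusztig formula you write down is not a sum over all $y\leqslant w$ but only over those $y$ with $y\cdot\mu^-\in X^+$; after translation the index set is still constrained by this $\mu^-$-dependent condition, and your coset regrouping does not immediately produce the full inner sum $\sum_{x\in W_{\ell,I},\,y'x\leqslant w}(-1)^{l(x)}P_{y'x,w}(1)$ required by \eqref{parabolicKLpolynomial}. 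The clean repair is to write the regular formula with formal Weyl characters $\chi(y\cdot\mu^-)$ summed over \emph{all} $y\leqslant w$ (equivalently over all of $W_\ell$, setting $P_{y,w}=0$ for $y\not\leqslant w$), and then note that translation on characters sends $\chi(y\cdot\mu^-)\mapsto\chi(y\cdot\lambda^-)$ for every $y$; the regrouping then goes through verbatim. Once patched in this way your argument is correct, but it is more elaborate than the paper's direct appeal to \cite{T}.
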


\begin{proof}
According to \cite[Theorem 6.4]{T},
\begin{equation} \label{charformula}
\ch L_\zeta(\lambda) = \sum_{y \in W_\ell,y \leqslant w, y \cdot \lambda^- \in X^+} (-1)^{l(w)-l(y)} P_{y,w}(1) \ch \Delta_\zeta(y \cdot \lambda^-).
\end{equation}
If $y \in W_\ell$ is not dominant for $\lambda^-$, then $\ch \Delta_\zeta(y \cdot \lambda^-) = 0$. Also, if $y \nleqslant w$, then $P_{y,w} = 0$. Then \eqref{charformula} can be rewritten as
\begin{align*}
\ch L_\zeta(\lambda) &= \sum_{y \in W_\ell^I} (-1)^{l(w)-l(y)} \left( \sum_{x \in W_{\ell,I}} (-1)^{l(x)} P_{yx,w}(1) \right) \ch \Delta_\zeta(y \cdot \lambda^-) \\
&= \sum_{y \in W_\ell^I} (-1)^{l(w)-l(y)} P_{y,w}^{I,-1}(1) \ch \Delta_\zeta(y \cdot \lambda^-)
\end{align*}
by \eqref{parabolicKLpolynomial}.
\end{proof}

Now choose $J \subseteq \Pi$ such that $\Phi_{\lambda^-}$ is $W$-conjugate to $\Phi_J$. Let
\begin{equation} \label{eq:nilpotentradical}
{\mathfrak u}_J=\sum_{\alpha\in\Phi^+\backslash\Phi_J^+}\g_{-\alpha}\subset\g
\end{equation}
be the nilpotent radical of the (negative) standard parabolic subalgebra $\fp_J \supseteq \fb$ determined by $J$. Let $G$ be the simple complex algebraic group with Lie algebra $\g$. Recall that $G\cdot{\mathfrak u}_J$ is a closed, irreducible subvariety of the nullcone $\sN(\g)$ of $\g$.

The following theorem was first stated in \cite[Theorem 6.1]{Ost}. We also refer the reader to \cite{BNPP}, which also considers the situation when $\ell \leq h$ and when $(l,r) \neq 1$ for $r$ a bad prime of $\Phi$.

\begin{thm} \label{firstsupport}
Let $\lambda \in X^+$, and choose $J\subseteq \Pi$ such that $w(\Phi_{\lambda})=\Phi_{J}$ for some $w \in W$. Then $\sV_{\uzg}(\Delta_\zeta(\lambda)) = \sV_{\uzg}(\nabla_\zeta(\lambda)) = G\cdot{\mathfrak u}_J$.
\end{thm}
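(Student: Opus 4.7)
The plan is to follow the framework of \cite{NPV,Ost} and reduce the calculation to the small quantum Borel subalgebra $u_\zeta(\fb)\subset u_\zeta(\g)$. The Ginzburg--Kumar isomorphism $\opH^{2\bullet}(u_\zeta(\g),\C)\cong \C[\sN]$ is compatible, under cohomological restriction, with the analogous isomorphism $\opH^{2\bullet}(u_\zeta(\fb),\C)\cong \C[\fu]$; the induced map of spectra is the natural closed embedding $\fu\hookrightarrow \sN$. Combined with $G$-equivariance coming from the natural $G$-conjugation action on cohomology, one obtains the orbit formula
\[
\sV_{u_\zeta(\g)}(M)\;=\;G\cdot \sV_{u_\zeta(\fb)}(M\vert_{u_\zeta(\fb)})
\]
for any finite-dimensional $U_\zeta(\g)$-module $M$, restricted to $u_\zeta(\g)$. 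This reduces the problem to a computation inside $\fu$.

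For the Weyl module case, exploit the (co)induced description of $\Delta_\zeta(\lambda)$ starting from the one-dimensional $u_\zeta(\fb^+)$-module $\C_\lambda$. A PBW argument shows that $\Delta_\zeta(\lambda)\vert_{u_\zeta(\fb)}$ is, as a graded $u_\zeta(\fu)$-module, a character twist of the regular representation, so its $u_\zeta(\fb)$-support depends on $\lambda$ only through $\C_\lambda$. A rank-variety argument in the spirit of \cite{NPV} then identifies
\[
\sV_{u_\zeta(\fb)}\bigl(\Delta_\zeta(\lambda)\vert_{u_\zeta(\fb)}\bigr)\;=\;\fu_{\Phi_\lambda}\;:=\;\sum_{\alpha\in\Phi^+\setminus\Phi^+_\lambda}\g_{-\alpha},
\]
since the root directions along which the $\lambda$-twist is trivial (to first order, modulo $\ell$) are precisely those indexed by $\Phi^+_\lambda$. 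Choosing $w\in W$ with $w(\Phi_\lambda)=\Phi_J$ and lifting to $\dot w\in N_G(T)\subset G$ gives a $G$-conjugation $\dot w\cdot \fu_J= \fu_{\Phi_\lambda}$, so $G\cdot \fu_{\Phi_\lambda}=G\cdot \fu_J$, settling the standard case.

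For the costandard module, apply the standard contravariant duality $\tau$ on the integrable $U_\zeta(\g)$-module category: $\nabla_\zeta(\lambda)^\tau\cong \Delta_\zeta(\lambda)$ preserves the highest weight, and since support varieties are $\tau$-invariant, the Weyl-module result transfers at once. The main obstacle lies entirely in the first step: making the orbit formula $\sV_{u_\zeta(\g)}(-)=G\cdot \sV_{u_\zeta(\fb)}(-)$ precise, and correctly pinning down $\sV_{u_\zeta(\fb)}(\Delta_\zeta(\lambda)\vert_{u_\zeta(\fb)})$ as the explicit subspace $\fu_{\Phi_\lambda}$, both rely on the cohomological machinery developed in \cite{GK,NPV} together with its refinements in \cite{Ost,BNPP}; once these are in hand, the remaining steps are essentially formal.
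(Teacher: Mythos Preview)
The paper does not supply its own proof of this theorem; it is quoted from \cite[Theorem 6.1]{Ost} (with \cite{BNPP} for extensions), and the only argument the paper adds is the brief remark establishing $\sV_{\uzg}(\Delta_\zeta(\lambda))=\sV_{\uzg}(\nabla_\zeta(\lambda))$. So your sketch is being compared against Ostrik's proof rather than something in this paper, and at the level of broad strategy---reduce to $u_\zeta(\fb)$, compute there, then $G$-saturate---you are aligned with that source.

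There is, however, a genuine gap in your sketch. You write that ``a rank-variety argument in the spirit of \cite{NPV}'' identifies $\sV_{u_\zeta(\fb)}(\Delta_\zeta(\lambda))$ with the explicit root subspace $\fu_{\Phi_\lambda}$. But no rank-variety description is available for modules over the small quantum group; the paper says so explicitly in its closing remark (``at present there is no known concrete realization in $\sN$ for the support varieties of modules over the small quantum group''). The heuristic ``root directions along which the $\lambda$-twist is trivial modulo $\ell$'' is exactly a rank-variety intuition, and it does not transfer. Ostrik's actual computation of the $u_\zeta(\fb)$-support is cohomological: one computes $\Ext^\bullet_{u_\zeta(\fb)}$ (or equivalently relative cohomology) via a spectral sequence and identifies the answer with sections of a line bundle, from which the support in $\fu$ is read off. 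Your sketch replaces this with a step that is not known to exist. A second, smaller issue: the assertion $\dot w\cdot \fu_J=\fu_{\Phi_\lambda}$ is false in general, since $w$ need not send $\Phi^+\setminus\Phi_J^+$ to $\Phi^+\setminus\Phi_\lambda^+$; only the $G$-saturations coincide, and even that requires the Johnston--Richardson theorem rather than a single Weyl group element.

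On the duality step your approach also differs from the paper's. You invoke the contravariant duality $\tau$ and claim support varieties are $\tau$-invariant; the paper instead uses the ordinary linear dual, the identification $\Delta_\zeta(\lambda)\cong\nabla_\zeta(-w_0\lambda)^*$, the equality $\Phi_\lambda=w_0(\Phi_{-w_0\lambda})$, and the quasitriangularity of $u_\zeta(\g)$ (cf.\ \cite[Remark~5.3]{PW}, \cite{M}) to conclude $\sV_{\uzg}(M)=\sV_{\uzg}(M^*)$. Both routes are short, but the paper's is the one for which the required invariance of support varieties is actually on record in the quantum setting.
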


We remark that the subset $J \subseteq \Pi$ and the element $w \in W$ in Theorem \ref{firstsupport} may not be unique, but the Johnston--Richardson theorem \cite{JR} guarantees that if $J,K \subseteq \Pi$ are such that $\Phi_J$ is conjugate to $\Phi_K$ under $W$, then $G \cdot \fu_J = G \cdot \fu_K$, so the variety $\sV_{\uzg}(\nabla_\zeta(\lambda))$ is well-defined. Also, the equality $\sV_{\uzg}(\Delta_\zeta(\lambda)) = \sV_{\uzg}(\nabla_\zeta(\lambda))$ may be seen as follows. First, $\Phi_{\lambda} = w_0(\Phi_{-w_0\lambda})$, so $\sV_{\uzg}(\nabla_\zeta(\lambda)) = \sV_{\uzg}(\nabla_\zeta(-w_0\lambda))$. Next, $\Delta_\zeta(\lambda) = \nabla_\zeta(-w_0\lambda)^*$, so the equality $\sV_{\uzg}(\Delta_\zeta(\lambda)) = \sV_{\uzg}(\nabla_\zeta(-w_0\lambda))$ follows as in \cite[Remark 5.3]{PW} from the fact that the small quantum group $\uzg$ is a quasitriangular Hopf algebra \cite[Example 8.16]{M}.

Now let $M$ be a (type 1, finite-dimensional) $U_\zeta(\g)$-module. Before proving the main theorem, we collect some information concerning the support varieties $\sV_{\uzg}(M)$ and $\sV_{\uzb}(M)$. First, by \cite[Lemma 2.6]{GK}, there exists a rational $B$-algebra isomorphism $\opH^{2\bullet}(\uzb,\C) \cong S^{\bullet}(\fu^*)$, and by \cite[Theorem 3]{GK} there exists a rational $G$-algebra isomorphism $\opH^{2\bullet}(\uzg,\C) \cong \C[\sN]$. Under these identifications, the restriction map $\opH^\bullet(\uzg,\C) \rightarrow \opH^\bullet(\uzb,\C)$ induced by the inclusion $\uzb \subset \uzg$ is simply the restriction of functions from $\sN$ to $\fu$. In particular, the restriction map is surjective.

Now, from the inclusion of algebras $\uzb \subset \uzg$ we get the commutative diagram
\[
\xymatrix{
\opH^\bullet(\uzg,\C) \ar@{->}[r] \ar@{->}[d]^{\res} & \opH^\bullet(\uzg,M \otimes M^*) \ar@{->}[d] \ar@{=}[r] &
\Ext_{\uzg}^\bullet(M,M) \ar@{->}[d] \\
\opH^\bullet(\uzb,\C) \ar@{->}[r] & \opH^\bullet(\uzb,M \otimes M^*) \ar@{=}[r] & \Ext_{\uzb}^\bullet(M,M),
}
\]
where the vertical maps are the obvious restriction maps, and the horizontal maps are induced by the $\uzg$-module homomorphism $\C \rightarrow M \otimes M^* \cong \End_k(M)$, $1 \mapsto \id_M$. From the commutativity of the diagram and the surjectivity of the leftmost restriction homomorphism, we conclude that there exists a closed embedding $\sV_{\uzb}(M) \subseteq \sV_{\uzg}(M) \cap \sN(\fb) = \sV_{\uzg}(M) \cap \fu$.

The support variety $\sV_{\uzg}(M)$ is naturally an algebraic $G$-variety, hence is a union of $G$-orbits. The dimension of $\sV_{\uzg}(M)$ as an algebraic variety is the maximum of the dimensions of the $G$-orbits in $\sV_{\uzg}(M)$. Similarly, $\sV_{\uzb}(M)$ is naturally an algebraic $B$-variety, and its dimension is the maximum of the dimensions of the $B$-orbits in $\sV_{\uzb}(M)$. Since $\sV_{\uzb}(M) \subseteq \sV_{\uzg}(M) \cap \fu$, it follows by a result of Spaltenstein \cite[Proposition 6.7]{Hum} that $\dim \sV_{\uzb}(M) \leq \frac{1}{2} \dim \sV_{\uzg}(M)$.

We are now ready to prove the main theorem.

\begin{thm} \label{maintheorem}
Let $\lambda \in X^+$, and choose $J\subseteq \Pi$ such that $w(\Phi_{\lambda})= \Phi_{J}$ for some $w \in W$. Then
\[ \sV_{\uzg}(L_\zeta(\lambda))=G\cdot{\mathfrak u}_J. \]
\end{thm}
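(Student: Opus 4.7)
The plan is to establish the two containments $\sV_{\uzg}(L_\zeta(\lambda)) \subseteq G \cdot \fu_J$ and $G \cdot \fu_J \subseteq \sV_{\uzg}(L_\zeta(\lambda))$. The first will follow by induction on $\lambda$, starting from Theorem \ref{firstsupport}; the second will come from computing $\dim \sV_{\uzb}(L_\zeta(\lambda))$ via the order of vanishing of the generic dimension polynomial $\dim_t L_\zeta(\lambda)$ at $t = \zeta$, combining the Kazhdan--Lusztig character formula (Proposition \ref{KLformula}) with the differentiation computation (Theorem \ref{maincalculation}) and the positivity of parabolic Kazhdan--Lusztig polynomials.

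For the upper bound, induct on $\lambda \in X^+$ in the dominance order. The short exact sequence $0 \to K \to \Delta_\zeta(\lambda) \to L_\zeta(\lambda) \to 0$ has $K$ with composition factors $L_\zeta(\mu)$ for $\mu < \lambda$ linked to $\lambda$. For each such $\mu$, the inductive hypothesis gives $\sV_{\uzg}(L_\zeta(\mu)) = G \cdot \fu_{J'}$ for some $J' \subseteq \Pi$ with $\Phi_{J'}$ $W$-conjugate to $\Phi_\mu$, hence $W$-conjugate to $\Phi_J$; the Johnston--Richardson theorem \cite{JR} then gives $G \cdot \fu_{J'} = G \cdot \fu_J$. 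Combined with $\sV_{\uzg}(\Delta_\zeta(\lambda)) = G \cdot \fu_J$ from Theorem \ref{firstsupport} and sub-additivity of support varieties under extensions, this yields $\sV_{\uzg}(L_\zeta(\lambda)) \subseteq G \cdot \fu_J$.

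For the lower bound, combining Proposition \ref{KLformula} with Lemma \ref{lem:Dquotient} gives
\begin{equation*}
\dim_t L_\zeta(\lambda) = \frac{1}{D_0(t)} \sum_{y} (-1)^{l(w) - l(y)} P^{I,-1}_{y,w}(1) \, D_{y \cdot \lambda^-}(t),
\end{equation*}
the sum running over those $y \in W_\ell^I$ with $y \cdot \lambda^- \in X^+$. For each such $y$, $\Phi_{y \cdot \lambda^-}$ is $W$-conjugate to $\Phi_{\lambda^-}$, so $|\Phi^+_{y \cdot \lambda^-}| = |\Phi^+_{\lambda^-}| = s$ and $\Psi_\ell(t)$ divides every $D_{y \cdot \lambda^-}(t)$ with multiplicity exactly $s$. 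Moreover, each such $y$ is itself minimal dominant for $\lambda^-$ relative to $\mu := y \cdot \lambda^-$, so Theorem \ref{maincalculation} applies to $\mu$ with $y$ playing the role of $w$. The sign $(-1)^{l(w)-l(y)}$ from the character formula combines with the sign $(-1)^{l(y) - a_{\lambda^-}}$ from Theorem \ref{maincalculation} into $(-1)^{l(w)-a_{\lambda^-}}$, independent of $y$. Extracting the nonzero common factor $(-1)^{l(w) - a_{\lambda^-}} s! \, \zeta^{-s} E_{\lambda^-}(\zeta)/D_0(\zeta)$, the $s$-fold derivative of $\dim_t L_\zeta(\lambda)$ at $\zeta$ becomes a nonzero scalar multiple of $\sum_y P^{I,-1}_{y,w}(1) \prod_{\alpha \in \Phi^+_\mu} 2 d_\alpha \langle \mu + \rho, \alpha^\vee \rangle$. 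Every summand is non-negative by the positivity of parabolic Kazhdan--Lusztig polynomials \cite{KT} and the strict dominance of $\mu + \rho$, and the $y = w$ summand is strictly positive since $P^{I,-1}_{w,w}(1) = 1$. Hence the order of vanishing of $\dim_t L_\zeta(\lambda)$ at $\zeta$ is exactly $s$.

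By the complexity-theoretic framework of \cite{NPV} (see also \cite{PW}), this order of vanishing equals the codimension $\dim \fu - \dim \sV_{\uzb}(L_\zeta(\lambda))$, giving $\dim \sV_{\uzb}(L_\zeta(\lambda)) = \dim \fu_J$. Spaltenstein's inequality then forces $\dim \sV_{\uzg}(L_\zeta(\lambda)) \geq 2 \dim \fu_J = \dim G \cdot \fu_J$, and the upper bound combined with the irreducibility of $G \cdot \fu_J$ yields $\sV_{\uzg}(L_\zeta(\lambda)) = G \cdot \fu_J$. The main obstacle is the non-cancellation in the differentiated sum: it relies on the fortunate alignment of the Kazhdan--Lusztig sign with the sign from Theorem \ref{maincalculation}, without which non-negativity of the $P^{I,-1}_{y,w}(1)$ alone could not preclude cancellation among summands of opposite sign.
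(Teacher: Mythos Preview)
Your proof is correct and follows essentially the same route as the paper's: the upper bound by induction through Weyl modules and Johnston--Richardson, and the lower bound by computing the exact order of vanishing of $\dim_t L_\zeta(\lambda)$ at $t=\zeta$ via Theorem \ref{maincalculation} and parabolic Kazhdan--Lusztig positivity, then invoking \cite{NPV} and Spaltenstein. One small imprecision: \cite[Theorem 3.4.1]{NPV} only yields the inequality $\dim \sV_{\uzb}(L_\zeta(\lambda)) \geq |\Phi^+| - s$, not the equality you assert, but that inequality is all the argument actually needs.
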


\begin{proof}
We first claim that $\sV_{\uzg}(L_\zeta(\lambda)) \subseteq G\cdot{\mathfrak u}_J$. This is proved in \cite[\S5]{Ost}, but it is easily deduced from the previous theorem: If $\mu$ is linked to $\lambda$ and is minimal among all dominant weights $\leq \lambda$, then $L_\zeta(\mu)=\nabla_\zeta(\mu)$, and $\Phi_\mu$ is $W$-conjugate to $\Phi_\lambda$. Then Theorem \ref{firstsupport} and the Johnston--Richardson theorem \cite{JR} imply that $\sV_{\uzg}(L_\zeta(\mu)) = G \cdot \fu_J = \sV_{\uzg}(\nabla_\zeta(\lambda))$. More generally, if $0 \to M_1 \to M_2 \to M_3 \to 0$ is a short exact sequence of finite-dimensional $\uzg$-modules, then $\sV_{\uzg}(M_{\sigma(1)})\subseteq \sV_{\uzg}(M_{\sigma(2)})\cup \sV_{\uzg}(M_{\sigma(3)})$ for any permutation $\sigma$ of $\set{1,2,3}$ \cite[Lemma 5.2]{PW}. Thus, the full claim follows from an evident induction argument, again using Theorem \ref{firstsupport} together with the remarks at the start of this section.

We next estimate the dimension of $\sV_{u_\zeta(\fb)}(L_\zeta(\lambda))$. We have $\dim \sV_{u_\zeta(\fb)}(L_\zeta(\lambda)) = c_{u_\zeta(\fb)}(L_\zeta(\lambda))$, the complexity of $L_\zeta(\lambda)$ as a $u_\zeta(\fb)$-module. By \cite[Theorem 3.4.1]{NPV}\footnote{Although cast in the situation of algebraic groups, the results of \cite[\S3]{NPV} are clearly applicable in the present
context.}, the complexity $c_{u_\zeta(\fb)}(L_\zeta(\lambda))$ satisfies the inequality $c_{\uzg}(L_\zeta(\lambda))\geq |\Phi^+|-d+1$, where $d$ is any positive integer such that $\Psi_\ell(t)^d$ does not divide the generic dimension $\dim_t L_\zeta(\lambda) \in \Z[t,t^{-1}]$. According to the character formula \eqref{charformula} in Proposition \ref{KLformula} and by Lemma \ref{lem:Dquotient},
\[
\dim_t L_\zeta(\lambda) = \sum_{y \in W_\ell^I} (-1)^{l(w)-l(y)} P_{y,w}^{I,-1}(1) D_{y \cdot \lambda^-}(t)/D_0(t),
\]
where $I \subseteq S_\ell$ is such that $W_{\ell,\lambda^-} = W_{\ell,I}$. Since $D_0(t)$ is relatively prime to $\Psi_\ell(t)$, to determine a lower bound for $c_{u_\zeta(\fb)}(L_\zeta(\lambda))$, it suffices to determine the multiplicity with which $\Psi_\ell(t)$ occurs as a factor in $D_0(t) \cdot \dim_t L_\zeta(\lambda)$. Equivalently, it suffices to determine the multiplicity with which the primitive $\ell$-th root of unity $\zeta \in \C$ occurs as a root of the Laurent polynomial $D_0(t) \cdot \dim_t L_\zeta(\lambda)$.

Set $f(t) = D_0(t) \cdot \dim_t L_\zeta(\lambda)$. If $f^{(i)}(\zeta) = 0$ for all $0 \leq i < n$, but $f^{(n)}(\zeta) \neq 0$, then $\zeta$ occurs as a root of $f$ with multiplicity exactly equal to $n$. Set $s = |\Phi_{\lambda^-}^+|$. Then $s = | \Phi_{y \cdot \lambda^-}^+|$ for any $y \in W_\ell$ by \cite[(3.4.2)]{NPV}. We want to show that $n = s$. Certainly $n \geq s$, because $\Psi_\ell(t)$ occurs as a factor of $D_{y \cdot \lambda^-}(t)$ precisely $s$ times by the discussion following Lemma \ref{lem:Dquotient}. Then, to prove $n=s$, we must show that $f^{(s)}(\zeta) \neq 0$.

Applying Theorem \ref{maincalculation}, we get
\begin{align*}
f^{(s)}(\zeta) &= \sum_{y \in W_\ell^I} (-1)^{l(w)-l(y)}P_{y,w}^{I,-1}(1) D_{y \cdot \lambda^-}^{(s)}(\zeta) \\
&= \sum_{y \in W_\ell^I} (-1)^{l(w)-(a_{\lambda^-})}P_{y,w}^{I,-1}(1) (s!) \left( \prod_{\alpha \in \Phi_{y \cdot \lambda^-}^+} 2d_\alpha \langle y \cdot \lambda^- + \rho, \alpha^\vee \rangle \right) \zeta^{-s} E_{\lambda^-}(\zeta) \\
&= \left( (-1)^{l(w)-(a_{\lambda^-})} (s!) \zeta^{-s} E_{\lambda^-}(\zeta) \right) \cdot \left( \sum_{y \in W_\ell^I} P_{y,w}^{I,-1}(1) \left( \prod_{\alpha \in \Phi_{y \cdot \lambda^-}^+} 2d_\alpha \langle y \cdot \lambda^- + \rho, \alpha^\vee \rangle \right) \right)
\end{align*}
The first term in the product of the last line is non-zero. The second term in the product is a sum of non-negative integers (by the positivity property for the parabolic Kazhdan-Lusztig polynomials). Since $P_{w,w}^{I,-1}(1) = 1$, we conclude that the second term in the product is a strictly positive integer, hence that $f^{(s)}(\zeta) \neq 0$.

Now $\sV_{u_\zeta({\mathfrak b})}(L_\zeta(\lambda))$ has dimension at least $|\Phi^+|-s = \abs{\Phi^+} - \abs{\Phi_{\lambda^-}^+}$. By the discussion preceding the theorem, we have $\dim \sV_{\uzg}(L_\zeta(\lambda)) \geq \abs{\Phi} - \abs{\Phi_{\lambda^-}} = \dim G \cdot {\mathfrak u}_J$. This completes the proof.
\end{proof}

\section{Results in positive characteristic} \label{section:poschar}

In this section, $G$ is a simple, simply-connected algebraic group defined over an algebraically closed field $k$ of positive characteristic $p$. (We leave to the reader the routine task of extending these results to reductive groups.) Fix a maximal torus $T \subset G$, and let $\Phi$ be the root system of $T$ acting on the Lie algebra $\g$. Most of the previous notation, with $\ell$ set equal to $p$, carries over to $G$ with only small changes (i.e., $B \supset T$ is a Borel subgroup whose opposite $B^+$ defines the set $\Phi^+$ of positive roots, etc.). The Lie algebra $\g$ carries a restricted structure; let $\ug$ denote its restricted enveloping algebra. We assume that $p>h$, so that the cohomology algebra $\opH^\bullet(u(\g),k)$ is isomorphic to $k[\sN]$, the coordinate ring of the variety $\sN$ of nilpotent elements in $\mathfrak g$. The result below concerns the support varieties $\sV_{u(\g)}(L(\lambda))$ of the irreducible $G$-modules $L(\lambda)$, $\lambda\in X^+$. If $\lambda=\lambda_0+p\lambda_1$ with $\lambda_0\in X^+_p$ (the restricted weights), then $\sV_{u(\g)}(L(\lambda))=\sV_{u(\g)}(L(\lambda_0))$. Therefore, in computing support varieties for irreducible modules, it suffices to consider only those having restricted highest weights.

Let $\lambda=w\cdot\lambda^-\in X^+$, $\lambda^-\in \CZ$, $w\in W_p$. Assume that $w$ is minimal dominant for $\lambda^-$. The Lusztig character formula asserts
\begin{equation}\label{modularLusztig}
\ch\,L(\lambda)=\sum_{y\in W_p, y\leq w, y\cdot\lambda^-\in X^+}(-1)^{l(w)-l(y)}P_{y,w}(1)\ch\,\Delta(y\cdot\lambda^-),
\end{equation}
where $\Delta(y\cdot\lambda^-)$ is the Weyl module for $G$ of highest weight $y\cdot\lambda^-$. As mentioned in the introduction, \eqref{modularLusztig} holds for all restricted dominant weights $\lambda$, provided that the prime $p$ is sufficiently large (the lower bound on $p$ depending on the root system).\footnote{The Lusztig character formula is also known to hold for restricted weights in the following low rank cases (assuming $p\geq h$): (1) type $A_1$, $p\geq 2=h$; (2) type $A_2$, $p\geq 3=h$; (3) type $B_2$, $p>4=h$; (4) type $G_2$, $p> 9=2h-3$; (5) type $A_3$, $p>4=h$; (6) type $A_4$, $p=5$ or $p=7$. Case (6) for $p=5$ is due independently to L.~Scott (working with undergraduates) and to A.\ Buch and N.\ Lauritzen. The case $p=7$ is due to L.\ Scott (again working with undergraduates). Both these cases required extensive computer application. For more details and references, see \cite{S}.}

Let $J\subseteq\Pi$ such that $\Phi_\lambda$ is $W$-conjugate to $\Phi_J$. By \cite[Proposition 7.4.1]{NPV}, $\sV_{u(\g)}(L(\lambda))\subseteq G\cdot{\mathfrak u}_J$, where ${\mathfrak u}_J$ is defined as in \eqref{eq:nilpotentradical}. With this fact, the proof of the following result is exactly analogous to that of Theorem \ref{maintheorem} (replacing $\Psi_\ell(t)$ by $\Psi_p(t)$, etc.).

\begin{thm} \label{charptheorem}
Assume that $G$ is a simple, simply-connected algebraic group over an algebraically closed field $k$ of characteristic $p>h$. Assume that the
Lusztig character formula \eqref{modularLusztig} holds for all restricted dominant weights. Then, for $\lambda\in X^+$ and $J\subseteq \Pi$ with $w(\Phi_{\lambda})=\Phi_{J}$,
\[ \sV_{u(\g)}(L(\lambda))=G\cdot{\mathfrak u}_J. \]
\end{thm}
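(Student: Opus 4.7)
The plan is to imitate the proof of Theorem \ref{maintheorem} step by step, with $\uzg$ replaced by $u(\g)$, $\uzb$ by $u(\fb)$, $\ell$ by $p$, and $\Psi_\ell(t)$ by $\Psi_p(t)$. The containment $\sV_{u(\g)}(L(\lambda)) \subseteq G\cdot\fu_J$ is already supplied by \cite[Proposition 7.4.1]{NPV}, so the real work is producing the matching lower bound $\dim \sV_{u(\g)}(L(\lambda)) \geq \dim G\cdot\fu_J = 2(\abs{\Phi^+} - \abs{\Phi^+_{\lambda^-}})$.

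First I would install the cohomology-theoretic framework that made the $\uzb$ estimate possible. Under the hypothesis $p > h$, Friedlander--Parshall give a rational $G$-algebra isomorphism $\opH^{2\bullet}(u(\g),k) \cong k[\sN]$, and the analogous work of Jantzen identifies $\opH^{2\bullet}(u(\fb),k) \cong S^\bullet(\fu^*)$ as a rational $B$-algebra; moreover the restriction map along $u(\fb) \hookrightarrow u(\g)$ corresponds to the restriction of functions $\sN \to \fu$ and is therefore surjective. The same commutative-diagram argument used before Theorem \ref{maintheorem} then gives a closed embedding $\sV_{u(\fb)}(M) \subseteq \sV_{u(\g)}(M) \cap \fu$ for any finite-dimensional $u(\g)$-module $M$, and Spaltenstein's dimension estimate \cite[Proposition 6.7]{Hum} yields $\dim \sV_{u(\fb)}(M) \leq \tfrac{1}{2}\dim\sV_{u(\g)}(M)$. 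It therefore suffices to produce a lower bound of $\abs{\Phi^+} - s$ on $\dim \sV_{u(\fb)}(L(\lambda))$, where $s = \abs{\Phi^+_{\lambda^-}}$.

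For this bound I would appeal to \cite[Theorem 3.4.1]{NPV}, which gives
\[ c_{u(\fb)}(L(\lambda)) \geq \abs{\Phi^+} - d + 1 \]
whenever $\Psi_p(t)^d$ fails to divide $\dim_t L(\lambda)$. The formal manipulation in the proof of Proposition \ref{KLformula} (which rests only on the Deodhar identity \eqref{parabolicKLpolynomial} and the vanishing of $\ch\Delta(y\cdot\lambda^-)$ when $y$ is not dominant) applies verbatim to \eqref{modularLusztig}, so combined with Lemma \ref{lem:Dquotient} I obtain
\[ \dim_t L(\lambda) = \frac{1}{D_0(t)} \sum_{y \in W_p^I} (-1)^{l(w)-l(y)} P_{y,w}^{I,-1}(1)\, D_{y\cdot\lambda^-}(t). \]
Setting $f(t) = D_0(t) \cdot \dim_t L(\lambda)$, each $D_{y\cdot\lambda^-}(t)$ is divisible by $\Psi_p(t)^s$ and $D_0(t)$ is coprime to $\Psi_p(t)$, so the desired statement $d = s + 1$ reduces to showing $f^{(s)}(\zeta) \neq 0$ for $\zeta$ a primitive $p$-th root of unity. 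Applying Theorem \ref{maincalculation} term-by-term and factoring out the common non-zero quantity $(-1)^{l(w)-a_{\lambda^-}}(s!)\zeta^{-s}E_{\lambda^-}(\zeta)$ reduces the non-vanishing to strict positivity of
\[ \sum_{y \in W_p^I} P_{y,w}^{I,-1}(1) \prod_{\alpha \in \Phi^+_{y\cdot\lambda^-}} 2 d_\alpha \langle y\cdot\lambda^- + \rho, \alpha^\vee\rangle, \]
which is a sum of non-negative integers (Kazhdan--Tanisaki positivity \cite[Corollary 4.1]{KT}) whose $y = w$ term equals $P_{w,w}^{I,-1}(1) = 1$ times a strictly positive product of inner products with a dominant weight. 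This finishes the lower bound, and hence the theorem.

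The expected obstacle is not in any single calculation --- each of the pieces (positivity of parabolic Kazhdan--Lusztig polynomials, Theorem \ref{maincalculation}, the NPV complexity bound) is field-insensitive --- but in verifying that the cohomological scaffolding transfers cleanly from the quantum to the modular setting. In particular one must be sure that both $\opH^{2\bullet}(u(\g),k) \cong k[\sN]$ and $\opH^{2\bullet}(u(\fb),k) \cong S^\bullet(\fu^*)$ hold rationally under the hypothesis $p > h$, and that the restriction map is geometrically identified with restriction of functions so that the Spaltenstein halving estimate can be invoked; everything else is formal transport of the quantum argument.
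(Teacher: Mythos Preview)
Your proposal is correct and follows essentially the same route as the paper: the paper simply states that the proof is ``exactly analogous'' to that of Theorem~\ref{maintheorem}, with the containment $\sV_{u(\g)}(L(\lambda)) \subseteq G\cdot\fu_J$ supplied by \cite[Proposition~7.4.1]{NPV} and $\Psi_\ell(t)$ replaced by $\Psi_p(t)$. Your only addition is the explicit caution about transferring the cohomological identifications $\opH^{2\bullet}(u(\g),k)\cong k[\sN]$ and $\opH^{2\bullet}(u(\fb),k)\cong S^\bullet(\fu^*)$ and the restriction-of-functions map to the modular setting; these are indeed available for $p>h$ (and in fact the results of \cite[\S3]{NPV} are stated directly in the algebraic-group setting, so no transfer is needed there).
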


\begin{rem} (a)
Suppose $p=h$. It may no longer hold that $A:= \opH^{2\bullet}(u(\g),k)\cong k[{\sN}]$. Even so, it has been proved that the algebraic variety defined by the affine algebra $A$ is {\it homeomorphic} to $\sN$ \cite{SFB1}, \cite{SFB2}. In this case, we identify $\sV_{u(\g)}(L(\lambda))$ with its image in $\mathcal N$, and Theorem \ref{charptheorem} holds with the condition ``$p>h$" replaced by the condition ``$p\geq h$".

(b) For a restricted Lie algebra $\mathfrak g$ (with restriction map $x\mapsto x^{[p]}$ and restricted enveloping algebra $u(\g)$) and a finite dimensional $u(\g)$-module $M$, the support variety $\sV_{u(\g)}(M)$ has an alternate, more concrete description as the set of all $x \in \g$ such that $x^{[p]}=0$ and the induced operator $x_M$ on $M$ has an $r\times r$ Jordan block of size $r<p$. In other words, for $0 \neq x \in \g$ satisfying $x^{[p]}=0$, $x\not\in \sV_{u(\g)}(M)$ if and only if the nilpotent operator $x_M$ acts projectively on $M$ (cf.\ \cite{FP}). At present there is no known concrete realization in $\sN$ for the support varieties of modules over the small quantum group.
\end{rem}


\begin{thebibliography}{AAAA}


\bibitem[\sf ABG]{ABG} S.\ Arkhipov, R.\ Bezrukavnikov, V.\ Ginzburg, Quantum groups, the loop Grassmannian, and the Springer resolution, {\em J.\ Amer.\ Math.\ Soc.} {\sf 17} (2004), 595--678.

\bibitem[\sf AJS]{AJS} H.\ Andersen, J.\ Jantzen, W.\ Soergel, \emph{Representations of quantum groups at a pth root of unity and of semisimple groups in characteristic p}, Ast\'{e}rique {\sf 220} (1994).

\bibitem[\sf BNPP]{BNPP} C.\ Bendel, D.\ Nakano, C.\ Pillen, B.\ Parshall, Cohomology for quantum groups via the geometry of the nullcone, preprint (2007).

\bibitem[\sf Be]{Be} R.\ Bezrukavnikov, Cohomology of tilting modules over quantum groups and $t$-structures on derived categories of coherent sheaves, {\em Invent.\ Math.} {\sf 166} (2006), 327--357.

\bibitem[\sf Bo]{Bo} N.\ Bourbaki, {\it Elements of Mathematics: Lie Groups and Lie Algebras}, Chapters 4--6, Springer (1972).

\bibitem[\sf CPS]{CPS} E.\ Cline, B.\ Parshall, L.\ Scott, Infinitesimal Kazhdan-Lusztig theories, {\em Cont. Math.} {\sf 139} (1992), 43--73.

\bibitem[\sf Deo]{Deo} V.\ Deodhar, On some geometric aspects of Burhat orderings II. The parabolic analogue of Kazhdan--Lusztig polynomials, \emph{J.\ Algebra}, {\sf 111} (1987), 483--506.

\bibitem[\sf F]{F} P.\ Fiebig, An upper bound on the exceptional characteristics for Lusztig's character formula, preprint (2009). arXiv:0811.1674

\bibitem[\sf FP]{FP} E.\ Friedlander, B.\ Parshall, Support varieties for restricted Lie algebras, {\it Invent.\  Math.,} {\sf 86} (1986), 553--562.

\bibitem[\sf GH]{GH} I.\ Grojnowski, M.\ Haiman, Affine Hecke algebras and positivity of LLT and Macdonald polynomials, preprint (2007).

\bibitem[\sf GK]{GK} V.\ Ginzburg, S.\ Kumar, Cohomology of quantum groups at roots of unity, {\em Duke Math.\ Journal}, {\sf 69} (1993), 179--198.

\bibitem[\sf Hum1]{Hum} J.E.\ Humphreys, {\em Conjugacy Classes in Semisimple Algebraic Groups}, Mathematical Surveys and Monographs, vol.\ 43, Amer.\ Math.\ Soc.\ (1995).

\bibitem[\sf Hum2]{Hum2} J.E. Humphreys, Comparing modular representations of semisimple groups and their Lie algebras, {\em Modular Interfaces: Modular Lie algebras, Quantum groups, and Lie Superalgebras, International Press}, Cambridge, MA, 1997.

\bibitem[\sf IM]{IM} N.\ Iwahori, H.\ Matsumoto, On some Bruhat decomposition and the structure of the Hecke rings of $p$-adic Chevalley groups, {\em Inst.\ Hautes \'Etudes Sci.\ Publ Math.} {\sf 25} (1965), 5--48.

\bibitem[\sf Jan]{Jan} J.C.\ Jantzen, \emph{Representations of Algebraic Groups}, second edition, Amer.\ Math.\ Soc., Providence, RI (2003).

\bibitem[\sf JR]{JR} D.S.\ Johnston, R.W.\ Richardson, Conjugacy classes in parabolic subgroups of semisimple algebraic groups. II., \emph{Bull.\ London Math.\ Soc.}, {\sf 9}, (1977), 245--250.

\bibitem[\sf KL]{KL} D.\ Kazhdan, G.\ Lusztig, Tensor structures arising from affine Lie algebras, I--III; IV--VI, \emph{J.\ Amer.\ Math.\ Soc.} {\sf 6} (1993), 905--1011; {\sf 7} (1994), 335--453.

\bibitem[\sf KT]{KT} M.\ Kashiwara, T.\ Tanisaki, Parabolic Kazhdan--Lusztig polynomials and Schubert varieties, \emph{J.\ Algebra}, {\sf 249} (2002), 306--325.

\bibitem [\sf M]{M} E.\ M\"{u}ller, Some topics on Frobenius--Lusztig kernels, II, \emph{J.\ Algebra}, {\sf 206} (1998), 659--681.

\bibitem [\sf NPV]{NPV} D.K.\ Nakano, B.J.\ Parshall, D.C.\ Vella, Support varieties for algebraic groups, {\em J.\ Reine Angew.\ Math.}, {\sf 547} (2002), 15--49.

\bibitem[\sf Ost]{Ost} V.\ Ostrik, Support varieties for quantum groups, {\em Funct.\ Anal.\ and its Appl.}, {\sf 32}, (1998), 237--246.

\bibitem[\sf PW]{PW} B.\ Parshall, J-P.\ Wang, Cohomology of quantum groups: the quantum dimension, {\em Canadian J.\ Math.}, {\sf 45} (1993), 1276--1298.

\bibitem[\sf S]{S} L.\ Scott, Some new examples in $1$-cohomology, {\it J.\ Algebra} {\sf 260} (2003), 416--425.

\bibitem[\sf SFB1]{SFB1} A.\ Suslin, E.M.\ Friedlander, C.P.\ Bendel, Infinitesimal 1-parameter subgroups and cohomology, {\em Jour.\ Amer.\ Math.\ Soc.}, {\sf 10}, (1997), 693-728.

\bibitem[\sf SFB2]{SFB2} A.\ Suslin, E.M.\ Friedlander, C.P.\ Bendel, Support varieties for infinitesimal group schemes, {\em Jour.\ Amer.\ Math.\ Soc.}, {\sf 10}, (1997), 729-759.

\bibitem[\sf T]{T} T.\ Tanisaki, Character formulas of Kazhdan-Lusztig type, {\it Representations of finite dimensional algebras and related topics in Lie theory and geometry}, Fields Inst.\ Commun.\ {\sf 40}, Amer.\ Math.\ Soc., Providence, RI (2004) 261--276.

\end{thebibliography}
\end{document}